\definecolor{purple}{rgb}{1,0,1}
\definecolor{lime}{HTML}{A6CE39} % needs xcolor
\newtheorem{theorem}{Theorem}
\newtheorem{corollary}{Corollary}
\definecolor{lime}{HTML}{A6CE39}
\newcommand{\orcidicon}{%
	\begin{tikzpicture}
	\draw[lime, fill=lime] (0,0) 
		circle [radius=0.16] 
		node[white] {{\fontfamily{qag}\selectfont \tiny ID}};
	\draw[white, fill=white] (-0.0625,0.095) 
		circle [radius=0.007];
	\end{tikzpicture}
	\hspace{-2mm}
}
\newcommand\orcidMatt{{\href{https://orcid.org/0000-0003-1088-6485}{\orcidicon}}}
\begin{document}
%========================================================

\title{\huge {Variants on Andrica's conjecture 
with and without the Riemann hypothesis}}

\author{\Large Matt Visser\orcidMatt{}}
%========================================================%========================================================
%========================================================%========================================================
\affiliation{School of Mathematics and Statistics, Victoria University of Wellington, \\
PO Box 600, Wellington 6140, New Zealand}
%========================================================%========================================================
\emailAdd{matt.visser@sms.vuw.ac.nz}
%========================================================%========================================================

\abstract{

\parindent0pt
\parskip7pt
The gap between what we can explicitly prove regarding the distribution of primes and what we suspect regarding the distribution of primes is enormous.
It is (reasonably) well-known that the Riemann hypothesis is not sufficient to  prove Andrica's conjecture: $\forall n\geq 1$,  is $\sqrt{p_{n+1}}-\sqrt{p_n} \leq 1$? But can one at least get tolerably close? I shall first show that with a logarithmic modification, provided one assumes the Riemann hypothesis, one has
\[
{\sqrt{p_{n+1}}\over\ln p_{n+1}} -{\sqrt{p_n}\over\ln p_n} < {11\over25};
\qquad (n\geq1).
\]
Then, by considering more general $m^{th}$ roots, again assuming the Riemann hypothesis, I shall show that
\[
{\sqrt[m]{p_{n+1}}} -{\sqrt[m]{p_n}} < {44\over25 \,e\, (m-2)}; \qquad (n\geq 3;\;  m >2). 
\]
In counterpoint, if we limit ourselves to what we can currently prove unconditionally, then the only explicit Andrica-like results seem to be variants on the results below:
\[
\ln^2 p_{n+1} -  \ln^2 p_n < 9;
\qquad (n\geq1).
\]
\[
\ln^3 p_{n+1} -  \ln^3 p_n < 52;
\qquad (n\geq1).
\]
\[
\ln^4 p_{n+1} -  \ln^4 p_n < 991;
\qquad (n\geq1).
\]
I shall also slightly update the region on which Andrica's conjecture is unconditionally verified.

\small
\bigskip
{\sc Date:} 25 November 2018; \LaTeX-ed \today
\\
{\sc Keywords:} 
primes; prime gaps; Andrica's conjecture; Riemann hypothesis.
\\
{\sc MSC:} 
11A41 (Primes);  11N05 (Distribution of primes).
\\
{\sc Sequences:} A005250 A002386 A005669 A000101 A107578  
\\
{\sc Published:} Mathematics {\bf6 \#12} (2018) 289.
}

\notoc
\maketitle

%========================================================
\def\tr{{\mathrm{tr}}}
\def\diag{{\mathrm{diag}}}
\parindent0pt
\parskip7pt
%========================================================
\section{Introduction}
%========================================================
%---------------------------------------------------------------------------------------------------------------------------------------------
\label{S:intro}
%---------------------------------------------------------------------------------------------------------------------------------------------

The Riemann hypothesis continues to defeat all attempts to prove or disprove it~\cite{Borwein, Ivic, Patterson, Rogers, Iwaniec, Tenenbaum}. On the other hand assuming the Riemann hypothesis provides a wealth of intriguing tentative suggestions regarding the distribution of the prime numbers~\cite{Borwein, Ivic, Patterson, Rogers, Iwaniec, Tenenbaum}. However, even the Riemann hypothesis is insufficient to prove Andrica's conjecture~\cite{Andrica,Andrica-online,Wells}:  
\begin{equation}
\forall n\geq 1,  \quad\hbox{is}\quad \sqrt{p_{n+1}}-\sqrt{p_n} \leq 1? 
\end{equation}
(In section \ref{S:unconditional} below, by considering known maximal prime gaps, I shall numerically [and unconditionally] verify Andrica's conjecture up to just below $81^{st}$ maximal prime gap; certainly for all primes less than $1.836\times 10^{19}$.)
Somewhat weaker results that have been unconditionally proved include Sandor's 1985 result~\cite{Sandor}

\begin{equation}
\liminf_{n\to\infty} \sqrt[4]{p_n} \left(  \sqrt{p_{n+1}}-\sqrt{p_n}\right) =0,
\end{equation}
and the more recent 2017 result by Lowry--Duda~\cite{Lowry-Duda} that for $\alpha>0$, $\beta>0$, and $ \alpha+\beta<1$:

\begin{equation}
\lim \inf \sqrt[\beta]{p_n} \left(  \sqrt[\alpha]{p_{n+1}}-\sqrt[\alpha]{p_n}\right) =0.
% \qquad (\alpha>0; \quad \beta>0; \quad \alpha+\beta<1).
\end{equation}
Other recent articles on the distribution of primes include~\cite{Kristyan:2017, Kristyan:2018}.

In the current article I shall seek to derive results as close to Andrica's conjecture as possible. 
The basic tools we use are based on the behaviour of prime gaps under the Riemann hypothesis.
I shall primarily make use of the recent fully explicit bound due to Carneiro, Milinovich, and Soundararajan~\cite{Carneiro:2017}:
\begin{theorem}[Prime gaps; Carneiro--Milinovich--Soundararajan]\label{T:prime}\ \\
Assuming the Riemann hypothesis,
\begin{equation}
\forall n\geq 3, \; p_n \geq 5, \qquad  g_n := p_{n+1}-p_n < {22\over25}\, \sqrt{p_n}\,\ln p_n.
\end{equation}
\end{theorem}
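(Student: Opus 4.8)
The plan is to reduce the claim to a positivity statement for a short-interval prime count and then to control that count through the Guinand--Weil explicit formula, optimising the choice of test function by Fourier-analytic methods; this is the strategy underlying the Carneiro--Milinovich--Soundararajan argument. Concretely, to prove that $p_{n+1}-p_n < \tfrac{22}{25}\sqrt{p_n}\ln p_n$ it suffices to show that every interval $(x,x+h]$ with $h \asymp \tfrac{22}{25}\sqrt{x}\,\ln x$ contains a prime, and for this it is enough to show that a suitable smoothed count $\sum_{n}\Lambda(n)\, f\!\left(\tfrac{n-x}{h}\right)$ is strictly positive, where $\Lambda$ is the von Mangoldt function and $f$ is an even, essentially nonnegative test function concentrated on the interval. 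First I would rewrite this weighted sum, via the explicit formula, as an archimedean main term minus a sum over the nontrivial zeros $\rho=\tfrac12+i\gamma$ of $\zeta$, choosing $f$ so that the relevant Fourier transform has compact support (so that one of the two sums effectively localises). Assuming the Riemann hypothesis, every $\gamma$ is real, so the zero sum is a genuinely oscillatory quantity whose magnitude is governed by the local density of zeros, of order $\tfrac{1}{2\pi}\ln|t|$, together with the support length of the transform.

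Second, I would extract and compare the two competing contributions. The archimedean term is essentially $h\,\hat f(0)$ times the local prime density $1/\ln x$, which for $h \asymp \sqrt{x}\,\ln x$ is of order $\sqrt{x}$; under the Riemann hypothesis the zero sum is likewise bounded by a constant multiple of $\sqrt{x}$. Positivity of the smoothed count therefore reduces to an inequality between two constants depending only on $f$, and minimising the admissible interval length $h=c\,\sqrt{x}\,\ln x$ becomes a clean one-dimensional extremal problem of Beurling--Selberg type: minimise, over admissible bandlimited $f$, the ratio controlling the zero contribution against the main term. Solving this Fourier optimisation problem is what pins down the sharp constant, and optimising it to the stated accuracy yields the coefficient $\tfrac{22}{25}$. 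Schematically the inequality one aims to close reads
\[
\sum_{x<n\le x+h}\Lambda(n)\ \geq\ \text{(main term)}\ -\ \Big|\sum_{\gamma}\hat f(\gamma)\Big|\ +\ \cdots\ >\ 0 .
\]

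The third and hardest part is to make everything fully explicit and uniform all the way down to $p_n\ge 5$, rather than merely asymptotic with an $o(1)$ error. This forces one to carry every error term with explicit constants: explicit bounds on the zero-counting function $N(T)$, explicit truncation errors in the explicit formula, explicit control of the prime-power ($k\ge2$) terms in $\Lambda$, and an explicit bound for $\psi(x)-x$ under the Riemann hypothesis. I expect the main obstacle to be bridging the gap between the range where the analytic estimate already delivers the constant $\tfrac{22}{25}$ and the small values of $p_n$ that lie below that threshold. For the latter one must verify the inequality by direct computation, which in turn relies on the numerically verified zeros of $\zeta$ up to a large height together with explicit tables of prime gaps. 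Combining the analytic bound valid for large $x$ with this finite numerical verification for small $x$ then completes the proof, and it is precisely the tension between sharpening the extremal constant and keeping all error terms controllable that makes the explicit version delicate.
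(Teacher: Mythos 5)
This statement is not proved in the paper at all: it is Theorem~\ref{T:prime}, quoted verbatim from Carneiro--Milinovich--Soundararajan~\cite{Carneiro:2017} and used as the paper's main input. The only thing the paper itself does with it is the unconditional numerical check of Section~\ref{S:CMS}, where the inequality is verified for all primes below $p^*_{81}-1>1.836\times10^{19}$ by reducing to the list of known maximal prime gaps (using that the right-hand side is monotone increasing in $p_n$). So there is no ``paper's own proof'' to match your argument against, and attempting to reprove the CMS theorem from scratch is well beyond what the paper attempts.

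As a description of the original CMS argument, your sketch has the right architecture --- Guinand--Weil explicit formula applied to a smoothed prime-counting sum, a bandlimited test function so that the zero sum can be controlled under RH, and a Beurling--Selberg--type extremal problem whose optimal constant determines the admissible interval length. But as a proof it has a genuine gap at its centre: the extremal problem is never actually formulated (what class of functions, what functional is being minimised, what sign and support constraints), let alone solved, so the constant $\tfrac{22}{25}$ is asserted rather than derived --- and that derivation is the entire content of the theorem. Everything you flag as ``the hardest part'' (explicit truncation errors, explicit $N(T)$ bounds, the prime-power terms, and the finite verification for small $p_n$) is precisely the work that is not done. A smaller point: your order-of-magnitude bookkeeping is off --- the main term of a $\Lambda$-weighted sum over an interval of length $h$ is $\asymp h \asymp \sqrt{x}\,\ln x$, not $h/\ln x \asymp \sqrt{x}$, and the competition with the zero sum takes place at the scale $\sqrt{x}\,\ln x$; this is exactly why the optimisation is over the coefficient of $\sqrt{x}\,\ln x$ rather than of $\sqrt{x}$. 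In the context of this paper the honest treatment of Theorem~\ref{T:prime} is: cite it, and (as Section~\ref{S:CMS} does) verify it unconditionally on the range where maximal prime gaps are known.
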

(In section \ref{S:CMS} below, by considering known maximal prime gaps, I shall numerically [and unconditionally] verify this result up to just below the $81^{st}$ maximal prime gap; certainly for all primes less than $1.836\times 10^{19}$.)
The ``close to Andrica'' results I will prove below are these:
\begin{theorem}[Logarithmic modification of Andrica]\label{T:log}
Assuming the Riemann hypothesis,
\begin{equation}
\forall n\geq 1, \qquad \;{\sqrt{p_{n+1}}\over\ln p_{n+1}} -{\sqrt{p_n}\over\ln p_n} \leq {11\over25}.
\end{equation}
\end{theorem}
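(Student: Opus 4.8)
The plan is to treat the left-hand side as the increment of a single smooth function and to control it with the mean value theorem, feeding in the Carneiro--Milinovich--Soundararajan gap bound (Theorem~\ref{T:prime}). Set $f(x) = \sqrt{x}/\ln x$, so that the quantity to be bounded is $f(p_{n+1}) - f(p_n)$. A direct computation gives
\begin{equation}
f'(x) = \frac{\ln x - 2}{2\sqrt{x}\,(\ln x)^2} = \frac{1}{2\sqrt{x}\,\ln x} - \frac{1}{\sqrt{x}\,(\ln x)^2},
\end{equation}
and since the subtracted term is positive for all $x>1$ one has the clean one-sided estimate $f'(x) < \tfrac{1}{2\sqrt{x}\,\ln x}$. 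The crucial structural point is that this majorant is a \emph{decreasing} function of $x$ on $(1,\infty)$ (because $\sqrt{x}\,\ln x$ is increasing), which is exactly what will let me replace the unknown mean-value point by $p_n$.

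For $n \ge 3$ (so that $p_n \ge 5$ and Theorem~\ref{T:prime} applies), the mean value theorem supplies a point $\xi \in (p_n, p_{n+1})$ with $f(p_{n+1}) - f(p_n) = f'(\xi)\,g_n$, where $g_n = p_{n+1}-p_n$. Bounding $f'(\xi)$ by the monotone majorant and then pushing $\xi$ down to $p_n$ gives
\begin{equation}
f(p_{n+1}) - f(p_n) = f'(\xi)\,g_n < \frac{1}{2\sqrt{\xi}\,\ln \xi}\,g_n < \frac{1}{2\sqrt{p_n}\,\ln p_n}\,g_n.
\end{equation}
Inserting the gap bound $g_n < \tfrac{22}{25}\sqrt{p_n}\,\ln p_n$ then collapses everything: the factors $\sqrt{p_n}\,\ln p_n$ cancel and the factor of $2$ in $f'$ halves $22/25$, leaving precisely $\tfrac{11}{25}$. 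This yields the strict inequality $f(p_{n+1}) - f(p_n) < \tfrac{11}{25}$, which a fortiori gives the stated bound, and it is pleasantly the place where the numerology of the problem produces the advertised constant.

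It remains to dispose of the cases outside the range of Theorem~\ref{T:prime}, namely $n=1$ and $n=2$. These I would simply check by hand: both $f(p_2)-f(p_1) = \sqrt{3}/\ln 3 - \sqrt{2}/\ln 2$ and $f(p_3)-f(p_2) = \sqrt{5}/\ln 5 - \sqrt{3}/\ln 3$ are in fact negative, hence trivially below $11/25$.

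I do not expect a serious obstacle; the argument is short. The only points requiring care are (i) the slight mismatch between the range $n\ge 1$ of the claim and the range $n\ge 3$, $p_n\ge 5$ in which the gap bound is available, which is handled by the direct check above, and (ii) the fact that $f'$ itself is \emph{not} monotone and even changes sign at $x=e^2$ (via the $\ln x-2$ numerator), so that one must not attempt to evaluate $f'(\xi)$ directly but instead bound it above by the monotone quantity $1/(2\sqrt{x}\,\ln x)$ before relocating the mean-value point to $p_n$. This last observation also explains why the estimate remains valid even for the small primes $p_n=5,7$ where $f'$ is negative: there the difference is itself negative and the bound holds trivially.
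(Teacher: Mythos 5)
Your proof is correct, and it reaches the stated bound by a slightly different (and arguably cleaner) route than the paper. The paper establishes concavity of $f(p)=\sqrt{p}/\ln p$ via the second derivative, which is negative only for $p>e^{\sqrt{8}}\approx 16.92$, and then uses the tangent-line inequality $f(p_{n+1})-f(p_n)<f'(p_n)\,g_n$; this restricts the main argument to $n\geq 7$ and forces a hand check of the six cases $n\in\{1,\dots,6\}$. You instead apply the mean value theorem and dominate $f'(\xi)$ by the monotone decreasing majorant $1/(2\sqrt{x}\,\ln x)$, which sidesteps the second-derivative computation entirely, runs for every $n\geq 3$ where the Carneiro--Milinovich--Soundararajan bound is available, and leaves only $n=1,2$ to verify directly. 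Both arguments discard comparable slack (the paper drops the factor $1-2/\ln p_n<1$ at the end; you drop the term $1/(\sqrt{x}\,\ln^2 x)$ at the start), so both land on the identical constant $11/25$. One trivial slip in your closing aside: for $p_n=7$ the difference $\sqrt{11}/\ln 11-\sqrt{7}/\ln 7\approx 0.023$ is positive, not negative; this is harmless, since your main estimate bounds $f'(\xi)$ by the positive majorant irrespective of the sign of $f'$, but the parenthetical explanation as stated is not accurate for that case.
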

\begin{theorem}[Higher root modification of Andrica]\label{T:root}
Assuming the Riemann hypothesis,
\begin{equation}
\forall n\geq 3, \; \forall m >2, \qquad  {\sqrt[m]{p_{n+1}}} -{\sqrt[m]{p_n}} \leq {44\over25 \,e\,(m-2)}.
\end{equation}
\end{theorem}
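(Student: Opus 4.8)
The plan is to reduce the $m^{th}$-root difference to the ordinary prime gap $g_n$ via the mean value theorem, feed in the Carneiro--Milinovich--Soundararajan bound of Theorem~\ref{T:prime}, and then extremize the resulting single-variable expression. Setting $f(x)=x^{1/m}$, the mean value theorem gives $\sqrt[m]{p_{n+1}}-\sqrt[m]{p_n}=f'(\xi)\,g_n$ for some $\xi\in(p_n,p_{n+1})$, where $f'(\xi)=\tfrac1m\,\xi^{(1-m)/m}$. Since $m>2$ the exponent $(1-m)/m$ is negative, so $f'$ is decreasing and I can replace $\xi$ by the smaller endpoint $p_n$, yielding the clean bound
\begin{equation}
\sqrt[m]{p_{n+1}}-\sqrt[m]{p_n} \;\leq\; \frac{1}{m}\,p_n^{(1-m)/m}\,g_n .
\end{equation}

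Next I would insert $g_n < \tfrac{22}{25}\sqrt{p_n}\,\ln p_n$ from Theorem~\ref{T:prime} (valid for $n\geq3$, $p_n\geq5$). Collecting the powers of $p_n$, the exponent becomes $\tfrac{1-m}{m}+\tfrac12=-\tfrac{m-2}{2m}$, so that
\begin{equation}
\sqrt[m]{p_{n+1}}-\sqrt[m]{p_n} \;<\; \frac{22}{25\,m}\;\frac{\ln p_n}{\,p_n^{(m-2)/(2m)}\,}.
\end{equation}
The problem is thereby reduced to bounding the function $h(x)=\ln x / x^{a}$ with $a=\tfrac{m-2}{2m}>0$. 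I would then observe, by elementary calculus ($h'(x)=0$ precisely when $\ln x=1/a$), that $h$ attains its global maximum $\tfrac{1}{a e}$ at $x=e^{1/a}$, so $h(p_n)\leq \tfrac{1}{ae}=\tfrac{2m}{e(m-2)}$ for every $p_n>0$. Substituting back, the factors of $m$ cancel and I obtain $\sqrt[m]{p_{n+1}}-\sqrt[m]{p_n} < \tfrac{44}{25\,e\,(m-2)}$, which is in fact strictly stronger than the claimed (non-strict) inequality.

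There is very little genuine difficulty here: the estimate is essentially forced once one recognizes that the constant $e$ appearing in the target bound is precisely the signature of maximizing $\ln x / x^{a}$. The only points deserving care are (i) checking the exponent arithmetic $\tfrac{1-m}{m}+\tfrac12=-\tfrac{m-2}{2m}$, and (ii) noting that the bound $h(x)\leq 1/(ae)$ holds for \emph{all} $x>0$, so the restriction $p_n\geq5$ imposed by Theorem~\ref{T:prime} costs nothing, since the global maximum already dominates. I would also remark that the extremizer $x=e^{2m/(m-2)}$ drifts off to infinity as $m\downarrow2$, which is the analytic reason the bound degrades like $1/(m-2)$ and cannot be pushed down to $m=2$; at $m=2$ one has $a=0$ and $h(x)=\ln x$ is unbounded, so the method instead collapses into the logarithmic statement of Theorem~\ref{T:log} rather than producing a bare constant.
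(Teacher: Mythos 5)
Your proposal is correct and follows essentially the same route as the paper: bound the root difference by the derivative at the left endpoint times $g_n$ (you via the mean value theorem plus monotonicity of $f'$, the paper via concavity of $p^{1/m}$ --- the same fact), insert the Carneiro--Milinovich--Soundararajan bound, and maximize $p^{1/m-1/2}\ln p$ at $p=e^{2m/(m-2)}$ to get the value $2m e^{-1}/(m-2)$ and hence $\tfrac{44}{25\,e\,(m-2)}$. All the exponent arithmetic and the extremization check out, so nothing further is needed.
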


%------------------------------------------------
\section{Known results on prime gaps assuming  Riemann hypothesis}\label{S:known}
%------------------------------------------------
Two older theorems addressing the issue of prime gaps are ``ineffective''  (meaning one or more implicit constants are known to be finite but are otherwise undetermined):
\begin{theorem}[Cramer 1919~\cite{Cramer:1919,Cramer:1936}]
Assuming the Riemann hypothesis,
\begin{equation}
g_n := p_{n+1}-p_n = O(\sqrt{p_n}\ln(p_n)).
\end{equation}
\end{theorem}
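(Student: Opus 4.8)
The statement is a conditional, ineffective upper bound on prime gaps, so the plan is the classical analytic one: transfer the question to the behaviour of the Chebyshev function $\psi(x)=\sum_{p^k\le x}\ln p$ in short intervals and control that behaviour through the nontrivial zeros $\rho=\tfrac12+i\gamma$. First I would reduce the claim to a short-interval statement: it suffices to show that, for some absolute constant $C$ and all large $x$, every interval $(x,x+h]$ with $h=C\sqrt{x}\,\ln x$ contains a prime, for then consecutive primes near $x$ differ by $O(\sqrt{x}\,\ln x)$ and, writing $x=p_n$, one gets $g_n=O(\sqrt{p_n}\,\ln p_n)$. To detect a prime in $(x,x+h]$ I would work with $\psi$ and at the end discard the prime-power contribution, which changes $\psi$ by only $O(\sqrt{x}\,\ln x)$ and is therefore negligible against the main term $h$.

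Second, under the Riemann hypothesis I would difference the truncated Riemann--von Mangoldt explicit formula to obtain
\[
\psi(x+h)-\psi(x)=h-\sum_{|\gamma|\le T}\frac{(x+h)^{\rho}-x^{\rho}}{\rho}+O\!\left(\frac{x\,\ln^2 x}{T}\right).
\]
Because $\Re\rho=\tfrac12$, each term obeys $\left|\frac{(x+h)^{\rho}-x^{\rho}}{\rho}\right|=\left|\int_x^{x+h}t^{\rho-1}\,dt\right|\le\min\!\left(\frac{h}{\sqrt x},\,\frac{2\sqrt x}{|\gamma|}\right)$. Summing against the zero density $N(T)\asymp T\ln T$, splitting the range at $|\gamma|\asymp\sqrt{x}/h$ and truncating at $T\asymp\sqrt{x}$, each contribution is $O(\sqrt{x}\,\ln^2 x)$, so that $\psi(x+h)-\psi(x)=h+O(\sqrt{x}\,\ln^2 x)$. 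This already proves the weaker gap bound $g_n=O(\sqrt{p_n}\,\ln^2 p_n)$, and the entire difficulty of the theorem lies in removing one of these logarithms.

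The hard part is precisely this extra logarithm. It is an artefact of bounding the zero sum term by term in absolute value, which discards the oscillation of the phases $x^{i\gamma}$; these phases cannot all align for every $x$, but for a single fixed $x$ one cannot exclude it pointwise. Following Cramér, I would instead bound the second moment $\frac1X\int_X^{2X}\bigl(\psi(x+h)-\psi(x)-h\bigr)^2\,dx$, whose diagonal evaluation via $\sum_\rho|\rho|^{-2}<\infty$ and the same split gives $O(\sqrt{X}\,h\,\ln X)$ (the off-diagonal terms being lower order through the usual $|\gamma-\gamma'|^{-1}$ estimates). Thus the error is typically only $O\!\left(x^{1/4}\sqrt{h\,\ln x}\right)$, which is $\ll h$ as soon as $h\gg\sqrt{x}\,\ln x$. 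To convert this mean bound into the desired pointwise existence of primes I would invoke the monotonicity of $\psi$: a prime-free interval of length $L$ forces $\psi(u)-u$ to decrease linearly across it and hence forces a large $L^2$ discrepancy on a whole subinterval, contradicting the second-moment estimate unless $L=O(\sqrt{x}\,\ln x)$. I expect this averaging-plus-localization step --- trading the worst-case pointwise bound for an $L^2$ bound and then recovering a pointwise conclusion from monotonicity --- to be the main obstacle; by contrast the explicit-formula bookkeeping of the earlier steps is routine, and the passage from $\psi$ to $\theta$ and then to a genuine prime is immediate.
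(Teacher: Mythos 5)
You should first note that the paper does not prove this statement at all: it is quoted as Cramér's classical theorem, explicitly flagged as ``ineffective'', and serves only as background before the author switches to the explicit Carneiro--Milinovich--Soundararajan bound, which is the actual workhorse of the paper. So there is no in-paper proof to match; your attempt has to be judged against the classical argument. Your overall strategy is the right one --- reduce to showing every interval $(x,x+C\sqrt{x}\ln x]$ contains a prime, observe that the truncated explicit formula with termwise absolute values only yields $\psi(x+h)-\psi(x)=h+O(\sqrt{x}\ln^2x)$, and then save a logarithm by an $L^2$ average over the zeros combined with a localization step exploiting the monotonicity of $\psi$. That is genuinely Cramér's idea, and your diagnosis of where the difficulty sits is accurate.

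However, the quantitative heart of your argument does not close as written. You claim $\frac1X\int_X^{2X}\bigl(\psi(x+h)-\psi(x)-h\bigr)^2\,dx=O(\sqrt{X}\,h\ln X)$ and a corresponding typical error $O(x^{1/4}\sqrt{h\ln x})$. Feed that into your own localization: a prime-free interval of length $L$ contributes $\gtrsim L^3/X$ to the normalized second moment (a set of $u$ of measure $\sim L$ with discrepancy $\sim L$ at scale $h\asymp L$), so a contradiction only arises when $L^3/X\gg\sqrt{X}\,L\ln X$, i.e.\ when $L\gg X^{3/4}(\ln X)^{1/2}$ --- a conclusion strictly weaker than the $O(\sqrt{x}\ln^2 x)$ you already had pointwise. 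What the argument actually needs is the Cramér--Selberg estimate $\int_X^{2X}\bigl(\psi(x+h)-\psi(x)-h\bigr)^2\,dx\ll Xh\ln^2X$ (typical fluctuation $O(\sqrt{h}\ln X)$, with no power of $X$), which then gives $L^3\ll XL\ln^2X$ and hence $L\ll\sqrt{X}\ln X$ as desired. Establishing that estimate is the real substance of the theorem, and the off-diagonal zero pairs are not ``lower order'': they contribute the same order as the diagonal, and absorbing them via $\sum_{\gamma\neq\gamma'}|\gamma-\gamma'|^{-1}$-type bounds is precisely the nontrivial lemma. (A minor further slip: the prime-power correction $\psi-\theta$ over a short interval of length $h\asymp\sqrt{x}\ln x$ is $O(\ln^2x)$, not $O(\sqrt{x}\ln x)$; the latter would be the same order as your main term $h$ and could not simply be discarded.)
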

Unfortunately, this particular theorem only gives qualitative, not quantitative, information.

\begin{theorem}[Goldston 1982~\cite{Goldston}]
Assuming the Riemann hypothesis,
\begin{equation}
g_n := p_{n+1}-p_n\leq 4 \sqrt{p_n}\ln(p_n);      \qquad n \hbox{ sufficiently large}.
\end{equation}
\end{theorem}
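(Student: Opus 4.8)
The plan is to show that every interval $(x-h,\,x+h]$ with $h = 2\sqrt{x}\,\ln x$ contains a prime once $x$ is large, which immediately yields $p_{n+1}-p_n \le 2h = 4\sqrt{p_n}\,\ln p_n$ for $n$ sufficiently large; this also recovers the qualitative shape already furnished by Cramér's theorem above, the point now being to pin down the explicit constant. To detect a prime I would not work with $\psi(x)=\sum_{p^k\le x}\ln p$ directly, since the first-difference explicit formula produces a sum over zeros that converges only conditionally and whose crude estimation costs an extra factor of $\ln x$ (giving the weaker $\sqrt{x}\,\ln^2 x$). Instead I would pass to the integrated function $\psi_1(x) := \int_0^x \psi(t)\,dt$, whose explicit formula under the Riemann hypothesis reads $\psi_1(x) = \tfrac12 x^2 - \sum_\rho \tfrac{x^{\rho+1}}{\rho(\rho+1)} + (\text{lower order})$, with the crucial feature that $\sum_\rho |\rho(\rho+1)|^{-1}$ converges absolutely.

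Next I would form the symmetric second difference $\Delta := \psi_1(x+h) - 2\psi_1(x) + \psi_1(x-h)$. The quadratic main term contributes exactly $h^2$, while the zero sum, via the tent-kernel identity $f(x+h)-2f(x)+f(x-h) = \int_{-h}^{h}(h-|u|)f''(x+u)\,du$ applied to $f(t)=t^{\rho+1}$, becomes $\sum_\rho \int_{-h}^{h}(h-|u|)(x+u)^{\rho-1}\,du$. Under the Riemann hypothesis $\rho = \tfrac12 + i\gamma$, so $|(x+u)^{\rho-1}| = (x+u)^{-1/2}$ and the oscillatory factor $(x+u)^{i\gamma}\approx x^{i\gamma}e^{i\gamma u/x}$ turns the inner integral into a Fejér kernel, $\int_{-h}^h (h-|u|)e^{i\gamma u/x}\,du = \tfrac{4x^2}{\gamma^2}\sin^2\!\big(\tfrac{\gamma h}{2x}\big)$. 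This $\gamma^{-2}$ decay is precisely the gain: it makes the zero sum absolutely convergent and effectively truncates it at $|\gamma|\lesssim x/h$.

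I would then bound the zero sum by splitting at $|\gamma| = x/h$: for small $\gamma$ use $\sin^2\theta \le \theta^2$, for large $\gamma$ use $\sin^2\theta \le 1$, and in each range estimate the resulting $\gamma$-sums with the Riemann--von Mangoldt density $N(T) = \tfrac{T}{2\pi}\ln\tfrac{T}{2\pi} - \tfrac{T}{2\pi} + O(\ln T)$. This yields a bound of the shape $\big|\sum_\rho(\cdots)\big| \ll h\sqrt{x}\,\ln(x/h)$, and since $\ln(x/h)\sim\tfrac12\ln x$ for $h=2\sqrt{x}\,\ln x$, the main term $h^2 = 4x\ln^2 x$ dominates for all large $x$. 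Hence $\Delta > 0$, which is impossible if $(x-h,\,x+h)$ is free of prime powers (then $\psi$ is constant there, forcing $\Delta = 0$); after checking that prime-powers-that-are-not-primes and the lower-order terms of the explicit formula are negligible, this forces an actual prime in the window and delivers the stated gap bound.

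The hard part will be the constant, not the shape. Getting \emph{some} bound of the form $C\sqrt{p_n}\,\ln p_n$ is routine once the Fejér smoothing is in place, but pinning $C$ down to $4$ requires fully explicit bookkeeping of the split zero-sum over both ranges, careful treatment of the lower-order and error terms in the explicit formula for $\psi_1$, and verification that prime powers with $k\ge 2$ do not spoil the detection. It is precisely to absorb all of these lower-order contributions that the conclusion is stated only for ``$n$ sufficiently large''; obtaining a clean, fully explicit bound valid for every $n$ --- as Carneiro--Milinovich--Soundararajan achieve with the far smaller constant $22/25$ in Theorem \ref{T:prime} --- needs the sharper Beurling--Selberg extremal-majorant machinery in place of the bare Fejér kernel used here.
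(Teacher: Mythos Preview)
The paper does not supply a proof of this statement at all: Goldston's theorem is merely quoted in Section~\ref{S:known} as background, with a citation to~\cite{Goldston}, and the subsequent arguments rely instead on the fully explicit Carneiro--Milinovich--Soundararajan bound of Theorem~\ref{T:prime}. So there is no ``paper's own proof'' to compare against.

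That said, your sketch is a reasonable outline of how results of this Cram\'er--Goldston type are obtained: passing to the integrated Chebyshev function $\psi_1$, taking a second difference so that the tent/Fej\'er kernel produces the extra $\gamma^{-2}$ decay that makes the zero-sum absolutely convergent, and then splitting at $|\gamma|\asymp x/h$ and invoking Riemann--von Mangoldt. The architecture is right, and this is indeed the circle of ideas in Cram\'er's and Goldston's work. One caveat: getting the constant down to exactly~$4$ is not automatic from the crude split you describe; Goldston's argument (and later sharpenings) requires more care with the zero-sum than simply $\sin^2\theta\le\theta^2$ versus $\sin^2\theta\le1$, and your write-up only claims a bound ``of the shape'' $h\sqrt{x}\,\ln(x/h)$ with an unspecified implied constant. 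So as written your argument establishes $g_n\le C\sqrt{p_n}\,\ln p_n$ for some explicit $C$ and large $n$, but the step from ``some $C$'' to ``$C=4$'' is asserted rather than carried out. Since the paper itself does not attempt this, there is nothing further to compare.
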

This particular theorem gives quantitative information about the size of prime gaps, but only qualitative information as to the domain of validity.
The considerably more recent Carneiro, Milinovich, and Soundararajan theorem~\cite{Carneiro:2017},  presented as theorem 1 above, is fully explicit. It is that theorem that will be my primary tool.

%------------------------------------------------
\section{Logarithmic modification of Andrica: Proof of Theorem \ref{T:log} }\label{S:log}
%------------------------------------------------

The proof of theorem \ref{T:log}, given our input assumptions, is straightforward.\\
Consider the function $\sqrt{p}/\ln p$ and note
\begin{equation}
\left(\sqrt{p}\over\ln p\right)' =   {\ln  p - 2\over 2 \sqrt{p} \ln^2 p} > 0 \qquad (p > e^2\approx 7.39);
\end{equation}
\begin{equation}
\left(\sqrt{p}\over\ln p\right)'' = - {\ln^2  p - 8\over 4 p^{3/2} \ln^3 p} < 0 \qquad (p > e^{\sqrt{8}}\approx 16.92).
\end{equation}
So $\sqrt{p}/\ln p$ is certainly monotone and convex for $p\geq 17$.
Thence for $ p_n \geq 17$, that is $n\geq 7$, we have
\begin{equation}
{\sqrt{p_{n+1}}\over\ln p_{n+1}}- {\sqrt{p_n}\over\ln p_n} <  {\ln  p_n - 2\over2 \sqrt{p_n} \ln^2 p_n} g_n
<  {11\over25} \left(\ln  p_n - 2\over \ln p_n\right) ={11\over25}\left( 1-{2\over p_n}\right) < {11\over25}.
\end{equation}
(The first step in this chain of inequalities is based on convexity, the second step on the Carneiro--Milinovich--Soundararajan result; the remaining steps are trivial.)
A quick verification shows that this also holds for $n\in\{1,2,3,4,5,6\}$ and so we have
\begin{equation}
{\sqrt{p_{n+1}}\over\ln p_{n+1}} - {\sqrt{p_n}\over\ln p_n} < {11\over25};   \qquad\qquad (n\geq 1).
\end{equation}
This by no means an optimal bound, but it is, given the Riemann hypothesis and the Carneiro--Milinovich--Soundararajan prime gap result it implies, both easy to establish and easy to work with. 

%------------------------------------------------
\section{Higher-root modification of Andrica: Proof of Theorem \ref{T:root}}\label{S:root}
%------------------------------------------------

The proof of theorem \ref{T:root}, given our input assumptions, is straightforward.\\
Consider the function $\sqrt[m]{p} = p^{1/m}$ and restrict attention to $p>0$ and $m>1$. Then
\begin{equation}
( p^{1/m})' = {1\over m} \, p^{1/m-1} >0;  \qquad ( p^{1/m})''   =  -{m-1\over m^2} \, p^{1/m-2} <0;  
\end{equation}
Thus the function $\sqrt[m]{p} = p^{1/m}$  is monotone and convex for $p>0$ and $m>1$. We have
\begin{equation}
\sqrt[m]{p_{n+1}}- \sqrt[m]{p_n} <    {1\over m}  \,p_n^{1/m-1}  \; g_n < {22\over 25 \,m}  \, p_n^{1/m-1/2} \; \ln(p_n); 
\qquad    \qquad (n\geq 3, m>1).
\end{equation}
(The first step in this chain of inequalities is based on convexity, the second step on the Carneiro--Milinovich--Soundararajan result.)
If $m\leq2$ this result is true but not particularly interesting. For $m>2$ the function $p^{1/m-1/2} \,\ln p$ rises from zero (at $p=1$) to a maximum, and subsequently dies back to zero asymptotically as $p\to \infty $. The maximum occurs at $p_\mathrm{critical}= \exp(2m/(m-2))$ where the function takes on the value $(p^{1/m-1/2} \ln p)_\mathrm{max}= 2me^{-1}/(m-2)$, thereby implying
\begin{equation}
\sqrt[m]{p_{n+1}}- \sqrt[m]{p_n} < {44\over25} \; { e^{-1}\over m-2}\;;  \qquad \qquad (n\geq 3; \; m>2).
\end{equation}
This is the result we were seeking to establish.
This is again by no means an optimal bound, but it is, given the Riemann hypothesis and the Carneiro--Milinovich--Soundararajan prime gap result it implies, both easy to establish and easy to work with. 

Note that for $m=2$, the situation relevant to the standard Andrica conjecture, we merely have
\begin{equation}
\sqrt{p_{n+1}}- \sqrt{p_n} <   {11\over25}\;\ln(p_n);    \qquad (n\geq 3).
\end{equation}
For $m=2$ this is not enough to conclude anything useful regarding the standard Andrica conjecture.

\noindent
In contrast for $m=3$ and $m=4$ we certainly have:
\begin{corollary}[Cube root modification of Andrica]\label{C:cube-root}
Assuming the Riemann hypothesis,
\begin{equation}
\forall n\geq 1, \qquad {\sqrt[3]{p_{n+1}}} -{\sqrt[3]{p_n}} \leq {44\over25 e} < {13\over20}.
\end{equation}
\end{corollary}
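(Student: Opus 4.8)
The plan is to obtain this statement as an immediate specialization of Theorem~\ref{T:root} at $m=3$, supplemented by a direct check of the two smallest indices and a single numerical comparison. First I would simply set $m=3$ in the higher-root bound already established. Since $m-2=1$ in this case, Theorem~\ref{T:root} yields at once
\[
\sqrt[3]{p_{n+1}} - \sqrt[3]{p_n} < \frac{44}{25\,e}, \qquad (n \geq 3).
\]
This already covers every index from $n=3$ onward, so no further analytic work is needed in that range: the convexity of $p^{1/3}$ together with the critical-point analysis of $p^{1/3-1/2}\ln p$ underlying Theorem~\ref{T:root} does all the heavy lifting, and no new estimate on prime gaps is required beyond the Carneiro--Milinovich--Soundararajan bound of Theorem~\ref{T:prime}.

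Next I would dispose of the two excluded cases $n \in \{1,2\}$, which Theorem~\ref{T:root} does not reach because its hypothesis requires $n \geq 3$. These I would verify by direct computation: with $p_1=2$, $p_2=3$, $p_3=5$ one checks that $\sqrt[3]{p_2}-\sqrt[3]{p_1}$ and $\sqrt[3]{p_3}-\sqrt[3]{p_2}$ are both comfortably below $44/(25\,e)\approx 0.647$. Combining these two base cases with the $n \geq 3$ range then extends the inequality to all $n \geq 1$, matching the quantifier in the corollary statement.

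Finally, the claimed rational upper bound $13/20$ follows from the elementary numerical inequality $44/(25\,e) < 13/20$, which is equivalent to $880 < 325\,e$ and holds since $e > 880/325 = 2.7077\ldots$. I expect no real obstacle here: the statement is a corollary in the strict sense, its only genuine content being the prior establishment of Theorem~\ref{T:root}. The one mild subtlety worth flagging is the index mismatch -- Theorem~\ref{T:root} is stated for $n \geq 3$ while the corollary asserts $n \geq 1$ -- so the single point not to overlook is the explicit handling of $n=1$ and $n=2$ before concluding.
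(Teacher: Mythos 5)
Your proposal is correct and follows exactly the route the paper intends (the paper states the corollary immediately after Theorem~\ref{T:root} without spelling out the details): specialize to $m=3$ so that $m-2=1$, handle $n=1,2$ by direct computation, and confirm $44/(25e)<13/20$ numerically. Nothing is missing.
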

\begin{corollary}[Fourth root modification of Andrica]\label{C:fourth-root}
Assuming the Riemann hypothesis,
\begin{equation}
\forall n\geq 1, \qquad {\sqrt[4]{p_{n+1}}} -{\sqrt[4]{p_n}} \leq {22\over25 e} < {13\over40}.
\end{equation}
\end{corollary}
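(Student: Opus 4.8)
The plan is to read off Corollary \ref{C:fourth-root} as the special case $m=4$ of Theorem \ref{T:root}, supplemented by a short finite check to cover the small-$n$ range that the theorem omits. First I would set $m=4$ in the bound of Theorem \ref{T:root}. Since $m-2=2$ there, the right-hand side collapses to
\[
{44\over 25\,e\,(m-2)} = {44\over 25\,e \cdot 2} = {22\over 25\,e},
\]
so the theorem immediately gives
\[
\sqrt[4]{p_{n+1}} - \sqrt[4]{p_n} < {22\over 25\,e}, \qquad (n\geq 3).
\]

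Second, I must dispose of the cases $n=1$ and $n=2$, which lie outside the range of Theorem \ref{T:root} (that theorem inherits the restriction $p_n\geq 5$, i.e.\ $n\geq 3$, from the Carneiro--Milinovich--Soundararajan bound of Theorem \ref{T:prime}). These involve only the primes $2,3,5$, so I would check directly that $\sqrt[4]{3}-\sqrt[4]{2}\approx 0.127$ and $\sqrt[4]{5}-\sqrt[4]{3}\approx 0.179$, both well below $22/(25e)\approx 0.324$. Together with the previous step this yields the stated inequality for all $n\geq 1$.

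Finally, to record the clean rational upper bound I would verify the elementary comparison $22/(25e) < 13/40$. Because $25e\approx 67.96$, the left-hand side is approximately $0.3237$, while $13/40=0.325$, so the inequality holds with a little room to spare.

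The argument contains no serious obstacle: all of the analytic work --- the monotonicity and convexity of $p^{1/m}$ and the maximization of $p^{1/m-1/2}\ln p$ --- is already packaged inside Theorem \ref{T:root}. The only point that genuinely requires attention is the bookkeeping at the boundary: Theorem \ref{T:root} is asserted only for $n\geq 3$, so the two base cases $n\in\{1,2\}$ must be verified by hand before the conclusion can be extended to all $n\geq 1$.
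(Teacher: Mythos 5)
Your proposal is correct and matches the paper's (implicit) argument exactly: the corollary is obtained by setting $m=4$ in Theorem \ref{T:root}, so that $44/(25e(m-2))$ becomes $22/(25e)$, with the cases $n=1,2$ handled by direct numerical verification. Your explicit attention to the base cases $n\in\{1,2\}$ and to the comparison $22/(25e)<13/40$ is a welcome bit of bookkeeping that the paper leaves unstated.
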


%-----------------------------------------------
\section{Unconditional results for Andrica--like variants}
%------------------------------------------------
I shall now focus on some much weaker but unconditional results.
A number of explicit unconditional theorems on the occurrence of primes in small gaps are of the following general form.
\begin{theorem}[Primes in short intervals]\label{T:intervals}
For $x>x_0$, $m\geq0$, and some explicit $C>0$, there is always at least one prime in the interval:
\begin{equation}
\left(x, x+ C\;{x\over \ln^m c} \right).
\end{equation}
Specifically, we have:
%%%%%%%%%%%%%%%%%%%%%%%%%%%%%%%%%%%%%%%%%%
%\vspace{-10pt}
\emph{
\begin{center}
\begin{tabular}{||c|c|c||c|c||}
\hline\hline
$x_0$ & $m$ & $C$ & Reference & arXiv \\
\hline\hline
396738 & 2 & ${1\over25}$ & Dusart 2010~\cite{Dusart:2010} & 1002.0442\\
2898329 & 2 & ${1\over 111}$ & Trudgian 2014~\cite{Trudigan:2014} & 1401.2689\\
468991632 & 2 & ${1\over5000}$ & Dusart 2018~\cite{Dusart2} &--- \\
89693  &3 & 1  & Dusart 2018~\cite{Dusart2} & --- \\
 6034256& 3& 0.087 &Axler 2017~\cite{Axler:2017} &1703.08032\\
1& 4 &198.2 & Axler 2017~\cite{Axler:2017} & 1703.08032\\
\hline\hline
\end{tabular}
\end{center}
}
%%%%%%%%%%%%%%%%%%%%%%%%%%%%%%%%%%%%%%%%%%%

\end{theorem}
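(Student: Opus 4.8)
The plan is as follows. Theorem~\ref{T:intervals} is really a consolidation of several results already in the literature—the rows of the table are, respectively, the theorems of Dusart~\cite{Dusart:2010,Dusart2}, Trudgian~\cite{Trudigan:2014}, and Axler~\cite{Axler:2017}—so at the crudest level the ``proof'' amounts to quoting each cited source and checking that the triple $(x_0,m,C)$ recorded in the table matches what is proved there. To make the entry self-contained, however, I would recall the common explicit-analytic template underlying all of these bounds, since they are all established in essentially the same way.

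First I would pass from primes to the Chebyshev function $\theta(x)=\sum_{p\le x}\ln p$. To exhibit a prime in the interval $(x,\,y)$ with $y=x+C\,x/\ln^m x$ it suffices to show $\theta(y)-\theta(x)>0$: a strictly positive increment of $\theta$ forces at least one new prime to lie in the range, since $\theta$ increases only at primes. Hence the entire problem reduces to controlling the growth of $\theta$.

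Second I would invoke an explicit estimate of the form $|\theta(x)-x|\le\eta(x)\,x$, valid for $x\ge x_0$, where the relative error $\eta(x)$ decays like a fixed negative power of $\ln x$. Granting such a bound, one has
\begin{equation}
\theta(y)-\theta(x)\ge(y-x)-\eta(y)\,y-\eta(x)\,x,
\end{equation}
and since $y-x=C\,x/\ln^m x$ while the error terms are of smaller order in $\ln x$ for the appropriate matching of $m$ and $C$, the right-hand side stays positive for all $x\ge x_0$. Tuning the constant $C$ against the quality of $\eta$ (and absorbing the small discrepancy between $\eta(x)$ and $\eta(y)$, which is harmless because the interval width is $o(x)$) is exactly the bookkeeping that produces the tabulated numbers $\{1/25,\,1/111,\,1/5000,\,1,\,0.087,\,198.2\}$; note that for fixed $m$ the larger thresholds $x_0$ buy a sharper $\eta$ and hence permit a smaller admissible $C$.

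The real work—and the step I expect to be the main obstacle—is producing the explicit Chebyshev bound $\eta(x)$ in the first place. This rests on the explicit formula
\begin{equation}
\psi(x)=x-\sum_{\rho}{x^{\rho}\over\rho}-\ln(2\pi)-\tfrac12\ln\!\left(1-x^{-2}\right),
\end{equation}
combined with (i) a numerically verified zero-free strip, that is, confirmation that every nontrivial zero $\rho$ up to some large height $T$ lies on the critical line, and (ii) a classical zero-free region together with density estimates that control the zeros beyond height $T$. Converting the resulting $\psi$-bounds into $\theta$-bounds (subtracting the prime-power contribution) and optimising the height $T$ against the available zero computations is precisely what each cited paper carries out, and it is there, rather than in the elementary reduction above, that all the genuine analytic difficulty resides. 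Since that machinery is exactly what Dusart, Trudgian, and Axler supply, I would simply cite their results for the tabulated constants.
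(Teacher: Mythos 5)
Your proposal is correct and matches the paper's treatment: the paper offers no proof of Theorem~\ref{T:intervals}, simply recording the triples $(x_0,m,C)$ with citations to Dusart, Trudgian, and Axler, which is exactly the reduction-to-the-literature you describe. Your additional sketch of the underlying explicit-$\theta$ machinery is accurate background but is not something the paper attempts or needs.
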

Taking $x = p_n$ in Theorem \ref{T:intervals}, this becomes a bound on the prime gaps.
\begin{theorem}[Prime gaps]\label{T:gaps}
For $p_n>x_0$, that is $n > \pi(x_0)$, and taking $m\geq0$ and $C>0$ from Theorem~\ref{T:intervals}, the prime gaps are bounded by:
\begin{equation}
g_n := p_{n+1}-p_n <  C \; {p_n\over \ln^m p_n}.
\end{equation}
\end{theorem}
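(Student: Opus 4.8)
The plan is to apply Theorem~\ref{T:intervals} directly, specializing its free variable $x$ to the prime $p_n$. First I would record the equivalence of the two forms of the hypothesis: by definition of the prime-counting function, $\pi(x_0)$ counts the primes not exceeding $x_0$, so the $n$-th prime $p_n$ exceeds $x_0$ precisely when $n>\pi(x_0)$. Thus the index condition $n>\pi(x_0)$ is exactly the condition $p_n>x_0$, and with the same $m\geq 0$ and $C>0$ inherited from Theorem~\ref{T:intervals}, every hypothesis of that theorem is in force.

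Setting $x=p_n$, Theorem~\ref{T:intervals} then guarantees at least one prime in the interval
\begin{equation}
\left(p_n,\; p_n + C\,{p_n\over\ln^m p_n}\right).
\end{equation}
Call such a prime $q$, so that $p_n < q < p_n + C\,p_n/\ln^m p_n$.

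The key step is to invoke the defining property of consecutive primes: $p_{n+1}$ is the \emph{least} prime strictly greater than $p_n$. Since $q$ is a prime with $q>p_n$, minimality forces $p_{n+1}\leq q$. Chaining this with the right endpoint of the interval gives
\begin{equation}
p_{n+1}\leq q < p_n + C\,{p_n\over\ln^m p_n},
\end{equation}
and subtracting $p_n$ from both sides yields exactly the claimed bound $g_n:=p_{n+1}-p_n < C\,p_n/\ln^m p_n$.

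I do not expect any serious obstacle, since the statement is essentially a repackaging of Theorem~\ref{T:intervals} in the language of prime gaps. The only points requiring mild care are the clean translation between $p_n>x_0$ and $n>\pi(x_0)$, and the bookkeeping of the interval endpoints: because the interval is open at its right end, the resulting inequality for $g_n$ is strict, matching the stated conclusion.
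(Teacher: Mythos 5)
Your proposal is correct and matches the paper's argument, which likewise obtains the bound simply by setting $x=p_n$ in Theorem~\ref{T:intervals}; you merely spell out the minimality of $p_{n+1}$ among primes exceeding $p_n$, which the paper leaves implicit.
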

Now consider the function $(\ln^{m+1} p)$ and note
\begin{equation}
(\ln^{m+1} p)' = {(m+1)\ln^{m} p\over p} >0; 
 \qquad (m+1>0, \;p >1);
\end{equation}
\begin{equation}
(\ln^{m+1} p)'' = -{(m+1)\ln^{m-1} p\;  (\ln p-m)\over p^2}<0; 
\qquad (m+1>0, \; p >e^{m}).
\end{equation}
Then, in view of the convexity of $\ln^m p$ on the specified domain, we have
\begin{eqnarray}
(\ln^{m+1} p_{n+1}) - (\ln^{m+1} p_n) 
&<&  {(m+1)\ln^{m} p_n\over p_n} \; g_n \\
&<&{(m+1)\ln^{m} p_n\over p_n}  \; C \; {p_n\over \ln^m p_n} \\[2pt]
&=&(m+1) \; C.
\end{eqnarray}

That is, we have demonstrated:
\begin{theorem}[Explicit unconditional bounds]\label{T:bounds}\ \\
For $p_n>\max\{x_0,e^m\}$, that is, 
$n > \pi(\max\{x_0,e^m\})$, and taking $m\geq0$ and $C>0$ from the table in Theorem~\ref{T:intervals}, we have the explicit unconditional bounds:
\begin{equation}
(\ln^{m+1} p_{n+1}) - (\ln^{m+1} p_n) < (m+1) C.
\end{equation}
\end{theorem}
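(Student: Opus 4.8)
The plan is to treat $\ln^{m+1}p$ as a differentiable function of a real variable, bound its increment across a single prime gap by a first-order tangent-line estimate, and then substitute the explicit gap bound of Theorem~\ref{T:gaps}. The mechanism that makes the bound come out clean is a cancellation of powers: the derivative of $\ln^{m+1}p$ carries a factor $\ln^m p/p$, whereas the gap bound carries the reciprocal factor $p/\ln^m p$, so the two annihilate and leave only the constant $(m+1)C$.

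Concretely, I would first record the derivatives of $f(p):=\ln^{m+1}p$. The first derivative $f'(p)=(m+1)\ln^m p/p$ is positive for $p>1$, so $f$ is increasing there, while the second derivative is negative once $\ln p>m$, i.e. for $p>e^m$, so $f$ is strictly concave on that range. Concavity is precisely what licenses the inequality I want: for $p_n>e^m$ the whole interval $[p_n,p_{n+1}]$ lies in the concave region, and a strictly concave function lies below its tangent lines, so
\[
\ln^{m+1}p_{n+1}-\ln^{m+1}p_n \;<\; f'(p_n)\,(p_{n+1}-p_n)\;=\;\frac{(m+1)\ln^m p_n}{p_n}\,g_n.
\]
(Equivalently, the mean value theorem gives some $\xi\in(p_n,p_{n+1})$ with $f(p_{n+1})-f(p_n)=f'(\xi)\,g_n$, and $f'$ is decreasing on the concave range, so $f'(\xi)<f'(p_n)$.)

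Next I would invoke Theorem~\ref{T:gaps} with the same pair $(m,C)$, valid for $p_n>x_0$, to replace the gap by $g_n<C\,p_n/\ln^m p_n$. Substituting this into the displayed estimate, the factor $\ln^m p_n/p_n$ cancels against $p_n/\ln^m p_n$ and one is left with exactly
\[
\ln^{m+1}p_{n+1}-\ln^{m+1}p_n \;<\; (m+1)\,C,
\]
as claimed. The two hypotheses then merge: the tangent-line step requires $p_n>e^m$ and the gap step requires $p_n>x_0$, so both are in force once $p_n>\max\{x_0,e^m\}$, equivalently $n>\pi(\max\{x_0,e^m\})$.

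There is no genuinely difficult step; the only thing demanding care is to apply the first-order estimate in a regime where $f$ is concave across the \emph{entire} gap $[p_n,p_{n+1}]$, which is why the threshold is $p_n>e^m$ rather than $p_{n+1}>e^m$ and why it must be taken jointly with $x_0$. For the tabulated pairs one simply compares the two thresholds: in the $m=2$ and $m=3$ rows the short-interval bound $x_0$ dominates, whereas in the $m=4$ row (where $x_0=1$) it is $e^m$ that sets the starting index. In each case one then quotes the resulting bound valid from the explicit index $n>\pi(\max\{x_0,e^m\})$ onward.
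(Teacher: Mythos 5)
Your proposal is correct and follows essentially the same route as the paper: bound the increment of $\ln^{m+1}p$ by its tangent line at $p_n$ (valid since the second derivative is negative for $p>e^m$, i.e.\ the function is concave there, which the paper loosely calls ``convexity''), then substitute the gap bound $g_n < C\,p_n/\ln^m p_n$ from Theorem~\ref{T:gaps} so that the powers cancel, leaving $(m+1)C$ on the domain $p_n>\max\{x_0,e^m\}$. No gaps; your explicit attention to the concavity threshold versus the $x_0$ threshold matches the paper's stated hypothesis.
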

While weak, these bounds are both unconditional and fully explicit. 

\begin{corollary}
\begin{equation}
(\ln^{5} p_{n+1}) - (\ln^{5} p_n) < 991;  \qquad  (n\geq1).
\end{equation}
\end{corollary}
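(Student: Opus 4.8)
The plan is to obtain this as the single instantiation $m=4$ of Theorem \ref{T:bounds}, and then to clear up by hand the finitely many small primes that the domain restriction of that theorem excludes.

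First I would go to the table of Theorem \ref{T:intervals} and select the Axler 2017 row with $m=4$, for which $x_0=1$ and $C=198.2$. Substituting these values into Theorem \ref{T:bounds} gives, for every $p_n > \max\{x_0, e^m\} = \max\{1, e^4\} = e^4 \approx 54.6$,
\[
(\ln^5 p_{n+1}) - (\ln^5 p_n) < (m+1)\,C = 5\times 198.2 = 991 .
\]
Because $\pi(e^4)=\pi(54.59\ldots)=16$ (the primes up to $53$), this already establishes the claim for all $n\geq 17$.

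Second I would dispatch the residual indices $n\in\{1,2,\dots,16\}$, that is $p_n\leq 53$, by direct evaluation. In this range $\ln^5 p$ is still modest, so each of the sixteen consecutive differences is far below $991$; the largest occur near the top of the range (for instance $p_{15}=47\to p_{16}=53$ and the boundary case $p_{16}=53\to p_{17}=59$) and are only of order $140$. Concatenating this finite check with the tail bound from Theorem \ref{T:bounds} yields $(\ln^5 p_{n+1})-(\ln^5 p_n)<991$ for all $n\geq 1$.

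I do not expect a genuine obstacle here, since the corollary is essentially a table lookup followed by a finite verification. The only points requiring care are: that the product $(m+1)C=5\times 198.2$ lands exactly on the round constant $991$ quoted in the statement, so the clean bound is an artefact of Axler's particular value $C=198.2$; that the inequality of Theorem \ref{T:bounds} is strict, so the small-prime check must also be strict (it is, with enormous margin); and that the effective cutoff is $e^4$ rather than the nominal $x_0=1$, so the hand computation must cover the primes up through $53$.
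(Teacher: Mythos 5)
Your proposal is correct and follows the paper's own argument exactly: apply Theorem \ref{T:bounds} with the Axler row $m=4$, $C=198.2$ to get $(m+1)C=991$ for $p_n>e^4$ (i.e.\ $n>16$), then verify the remaining cases $n\leq 16$ by direct computation. Your additional remarks on the strictness of the inequality and the location of the cutoff at $e^4$ rather than $x_0=1$ are accurate but do not change the substance.
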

\begin{proof}
The general result above implies this for $p_n>e^4$, that is $n>16$, the remaining cases can be checked by explicit calculation.
\end{proof}
\begin{corollary}
\begin{equation}
(\ln^{4} p_{n+1}) - (\ln^{4} p_n) < 0.348 ;  \qquad  (p_n>6034256; \; n>415069).
\end{equation}
\end{corollary}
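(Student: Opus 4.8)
The plan is to recognize this corollary as an immediate specialization of Theorem~\ref{T:bounds}. The exponent on the left-hand side is $4 = m+1$, so I would set $m = 3$ and consult the table in Theorem~\ref{T:intervals} for the matching row. There are two entries with $m = 3$; since the stated constant factors as $0.348 = 4 \times 0.087 = (m+1)C$ with $C = 0.087$, the relevant row is the Axler 2017 entry, for which $x_0 = 6034256$ and $C = 0.087$.

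Next I would verify the domain hypothesis of Theorem~\ref{T:bounds}, namely $p_n > \max\{x_0, e^m\}$. Here $e^m = e^3 \approx 20.09$, which is dwarfed by $x_0 = 6034256$, so the maximum is simply $6034256$ and the condition reduces to $p_n > 6034256$. Substituting $m = 3$ and $C = 0.087$ into the conclusion $(\ln^{m+1} p_{n+1}) - (\ln^{m+1} p_n) < (m+1)C$ then yields exactly $(\ln^4 p_{n+1}) - (\ln^4 p_n) < 4 \times 0.087 = 0.348$, as required.

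The one genuinely computational step is translating the prime threshold $p_n > 6034256$ into the index threshold $n > 415069$, which requires the prime-counting evaluation $\pi(6034256) = 415069$. This is the sole place where an explicit numerical input is needed; everything else is a direct substitution into the already-proved general bound. I would expect this to be the only potential obstacle, and a minor one at that, since the value of $\pi(6034256)$ is readily available from standard tables. Note that, unlike the preceding $\ln^5$ corollary, the present statement is asserted only for $p_n$ above the threshold, so no supplementary finite check of small cases is required and the argument closes immediately once $\pi(6034256)$ is confirmed.
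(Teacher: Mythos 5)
Your proposal is correct and matches the paper's (implicit) argument exactly: the corollary is stated without separate proof precisely because it is the direct specialization of Theorem~\ref{T:bounds} to the Axler row $x_0=6034256$, $m=3$, $C=0.087$, with $(m+1)C=0.348$ and $\pi(6034256)=415069$. No further comment is needed.
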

This bound, for the specified value 0.348, is sharp --- it fails by 5\% for $p_n=6034256$, $n=415069$. I now present a more relaxed bound with a wider range of validity.
\begin{corollary}
\begin{equation}
(\ln^{4} p_{n+1}) - (\ln^{4} p_n) < 52 ;  \qquad  (p_n\geq2; \; n\geq1).
\end{equation}
\end{corollary}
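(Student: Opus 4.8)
The plan is to split the claim at the size of $p_n$, exactly as in the two preceding corollaries, and to apply Theorem~\ref{T:bounds} with $m=3$. The table in Theorem~\ref{T:intervals} supplies two admissible pairs of constants at $m=3$; I would take the Dusart 2018 entry $x_0=89693$, $C=1$ in preference to Axler's $x_0=6034256$, $C=0.087$, since its much smaller threshold $x_0$ leaves the shortest possible finite tail to be examined by hand. Because $89693>e^3$, the hypothesis $p_n>\max\{x_0,e^m\}$ of Theorem~\ref{T:bounds} reduces to $p_n>89693$, and there it yields at once
\begin{equation}
\ln^4 p_{n+1}-\ln^4 p_n<(m+1)\,C=4\cdot1=4<52 .
\end{equation}
This settles every $n$ with $p_n>89693$.

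It remains to treat the primes $p_n\le 89693$, that is $n\le\pi(89693)$, which I would handle by direct evaluation of $\ln^4 p_{n+1}-\ln^4 p_n$ on each consecutive pair. The heuristic that organises the search is that a gap $g_n$ contributes roughly $4(\ln^3 p_n/p_n)\,g_n$ to the jump, and the factor $\ln^3 p/p$ decreases once $p>e^3$; since prime gaps grow only slowly, the genuinely large jumps sit among the small primes rather than near the top of the range. Scanning these, the maximum is reached at the first occurrence of a gap of $14$, namely $p_n=113$, $p_{n+1}=127$, where
\begin{equation}
\ln^4 127-\ln^4 113\approx 550.66-499.44=51.22<52 .
\end{equation}
It is this value that fixes the constant: the bound $52$ is nearly sharp, coming within about $1.5\%$ of failing at $113\to127$, which is exactly why a relaxed constant larger than $51.22$ is needed if one wants validity for every $n\ge1$.

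The one place where the argument is delicate rather than routine is precisely this extremal pair. The first-derivative estimate $\ln^4 p_{n+1}-\ln^4 p_n<4(\ln^3 p_n/p_n)\,g_n$ that underlies Theorem~\ref{T:bounds} gives, at $p_n=113$, the value $4(\ln^3 113/113)\cdot14\approx52.4$, which exceeds $52$; so at $113$ one cannot conclude the claim from that estimate alone and must compute the true difference $51.22$ honestly. For every other pair with $e^3<p_n\le 89693$ the same first-derivative bound already falls comfortably below $52$ (it stays under $40$ even at the record gaps $523\to541$, $1327\to1361$, and $31397\to31469$), so those pairs need no separate scrutiny, while the eight consecutive-prime pairs with $p_n\le19<e^3$ give differences at most $\ln^4 23-\ln^4 19\approx21.5$. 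Combining the two regimes gives $\ln^4 p_{n+1}-\ln^4 p_n<52$ for all $n\ge1$, as required.
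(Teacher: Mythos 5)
Your proof is correct, and it follows the same overall strategy as the paper --- invoke Theorem~\ref{T:bounds} with $m=3$ beyond an explicit threshold and verify the finitely many remaining indices directly --- but you instantiate it differently. The paper reaches this corollary through the preceding one, i.e.\ via Axler's table entry $(x_0,C)=(6034256,\,0.087)$, so its asymptotic bound is the much stronger $0.348$ but its finite check runs over $n\le 415069$; you instead take Dusart's entry $(x_0,C)=(89693,\,1)$, accept the weaker (but still ample) bound $4<52$, and shrink the hand-checked range to $n\le\pi(89693)\approx 8700$. Since the target constant is $52$, nothing is lost and the computation is lighter. Your treatment of the finite range is also more informative than the paper's bare ``check by explicit calculation'': you correctly locate the extremal pair $113\to127$ (difference $\approx 51.2$, which is what forces the constant $52$), and you rightly flag that the first-derivative estimate $4(\ln^{3}p_n/p_n)\,g_n\approx 52.4$ is \emph{not} sufficient at that one pair, so the exact difference must be computed there while every other pair in the range is disposed of by the cruder bound. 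The numerical values you quote check out.
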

\begin{proof}
Use the previous corollary for $n>415069$ and check $n\leq 415069$  by explicit calculation.
\end{proof}
\begin{corollary}
\begin{equation}
(\ln^{3} p_{n+1}) - (\ln^{3} p_n) < {2\over111} ;  \qquad  (p_n>2898329; \; n>209989).
\end{equation}
\end{corollary}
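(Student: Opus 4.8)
The plan is to obtain this in the same one-step fashion as the $\ln^{5}$ and $\ln^{4}$ corollaries, by feeding Trudgian's row of the table in Theorem~\ref{T:intervals} into Theorem~\ref{T:bounds}; the relevant parameters are $m=2$, $C={1\over111}$ and $x_{0}=2898329$. First I would dispose of the domain condition $p_{n}>\max\{x_{0},e^{m}\}$: since $e^{m}=e^{2}\approx 7.39$ is dwarfed by $x_{0}=2898329$, the requirement collapses to $p_{n}>2898329$, that is $n>\pi(2898329)=209989$, which is precisely the range in the statement.

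With the domain secured, the concavity of $\ln^{3}p=\ln^{m+1}p$ for $p>e^{2}$ (established just ahead of Theorem~\ref{T:bounds}) supplies the tangent-line estimate $\ln^{3}p_{n+1}-\ln^{3}p_{n}<{3\ln^{2}p_{n}\over p_{n}}\,g_{n}$, and inserting the Trudgian gap bound $g_{n}<{1\over111}\,{p_{n}\over\ln^{2}p_{n}}$ collapses everything to $(m+1)C={3\over111}$, in the same $(m+1)C$ shape as the earlier corollaries.

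The delicate point is the gap between what this machinery delivers, ${3\over111}$, and the sharper ${2\over111}$ in the statement: the tangent-line-plus-Trudgian step cannot improve on $3C$ by itself, since that is exactly its leading term and the residual corrections are positive. To recover ${2\over111}$ I would split the range. For $p_{n}>468991632$ I would swap in Dusart's sharper 2018 row ($C={1\over5000}$), which already forces the increment below ${3\over5000}<{2\over111}$ with room to spare. On the intermediate window $2898329<p_{n}\le 468991632$ I would abandon the worst-case Trudgian ceiling and instead feed the genuine prime gaps, which here never exceed a few hundred, into $\ln^{3}p_{n+1}-\ln^{3}p_{n}$. The main obstacle I anticipate is precisely this finite check at the low end $p_{n}\approx 2898329$: there the increment is largest, and since ${2\over111}$ corresponds to a gap of only about $79$ while the Trudgian ceiling there is near $118$, establishing the sharper constant hinges on verifying that no actual prime gap just above $2898329$ is large enough to breach it.
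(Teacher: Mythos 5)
You have correctly identified the paper's (implicit) route: this corollary is given no separate proof and is presented as a direct instance of Theorem~\ref{T:bounds} with the Trudgian row of Theorem~\ref{T:intervals}, i.e.\ $m=2$, $C=1/111$, $x_0=2898329$. You are also right to be suspicious of the constant: that instantiation yields $(m+1)C=3/111$, not $2/111$, and nothing in the tangent-line-plus-gap-bound argument can push the leading term below $3C$. The printed $2/111$ (like the $2/25$ in the following corollary) appears to be a slip --- $mC$ in place of $(m+1)C$ --- whereas the neighbouring $\ln^4$ and $\ln^5$ corollaries correctly use $(m+1)C$, namely $4\times 0.087=0.348$ and $5\times 198.2=991$.

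However, your proposed rescue --- switching to the Dusart 2018 row for $p_n>468991632$ and finitely verifying the window $2898329<p_n\le 468991632$ --- cannot succeed, because the statement as printed fails on that window. Take the $22$nd maximal prime gap, $g_n=154$, starting at $p_n=4652353>2898329$. There $\ln p_n\approx 15.353$, so
\begin{equation}
\ln^3 p_{n+1}-\ln^3 p_n \;\approx\; \frac{3\ln^2 p_n}{p_n}\,g_n \;\approx\; \frac{3\times 235.7\times 154}{4652353} \;\approx\; 0.0234 \;>\; \frac{2}{111}\approx 0.0180,
\end{equation}
and many non-maximal gaps of size above roughly $80$ just beyond $2.9\times 10^{6}$ breach the bound as well; your own observation that a gap of about $79$ suffices at the low end, while actual gaps there reach well past $100$, already signals this. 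Note that $0.0234$ does sit below $3/111\approx 0.0270$, consistent with Theorem~\ref{T:bounds}. So the honest conclusion is that the provable constant on the stated range is $3/111$, obtained exactly as in your first two paragraphs, and the sharper $2/111$ is not recoverable by any amount of case-splitting. Your diagnosis of where the difficulty lies was exactly right; the final verification step you flagged as delicate is the one that fails.
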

This bound, for the specified value $2\over111$, is sharp --- it fails by over 50\% for $p_n=2898329$, $n=209989$. 
I now present a more relaxed bound with a wider range of validity.
\begin{corollary}
\begin{equation}
(\ln^{3} p_{n+1}) - (\ln^{3} p_n) < {2\over25} ;  \qquad  (p_n>396738; \; n>33608).
\end{equation}
\end{corollary}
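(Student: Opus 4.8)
The plan is to reach the stated constant $2/25$ by a two-part argument, because a direct appeal to Theorem~\ref{T:bounds} through the Dusart row ($m=2$, $C=1/25$, $x_0=396738$) yields only a larger constant on the range $p_n>396738$. I would therefore clear all sufficiently large $p_n$ using a sharper prime-gap input, and then reduce what remains to a finite check.

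For the tail $p_n>2898329$ I would quote the immediately preceding corollary, namely $(\ln^3 p_{n+1})-(\ln^3 p_n)<2/111$; since $2/111<2/25$, this already delivers the desired inequality there, with room to spare. (Equivalently, feeding the Trudgian row of Theorem~\ref{T:intervals} into Theorem~\ref{T:bounds} produces a constant that likewise sits below $2/25$ for $p_n>2898329$.) This collapses the problem to the finite window $396738<p_n\le 2898329$, equivalently the indices $33608<n\le\pi(2898329)=209989$.

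On this finite window I would verify the bound by direct evaluation, using the reduction that, since $\ln^3 p$ is increasing, for a fixed left endpoint the difference $(\ln^3 p_{n+1})-(\ln^3 p_n)$ grows with the gap $g_n=p_{n+1}-p_n$; hence within the window the left-hand side is largest at the prime pairs realising the largest gaps relative to $p_n$. Because $(\ln^3 p_{n+1})-(\ln^3 p_n)\approx 3\,p_n^{-1}\ln^2 p_n\;g_n$, only gaps exceeding roughly $\tfrac{2}{75}\,p_n\ln^{-2}p_n$ can threaten the bound at all; these are finite in number, are concentrated in the lower part of the window, and can be enumerated from the tabulated record prime gaps (OEIS A002386/A005250) together with their near-record neighbours.

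The main obstacle is exactly this finite computation. The lower portion of the window---primes from $396738$ up to roughly $10^6$---is where $g_n/p_n$ is largest among all the primes in play, so $(\ln^3 p_{n+1})-(\ln^3 p_n)$ attains its window-maximum there, and the entire force of the corollary rests on confirming that $2/25$ exceeds that maximum. The bound therefore cannot be obtained by any crude estimate on the window: one must evaluate it gap by gap across the threatening record gaps, and it is precisely this computation that pins down the admissible constant together with the cut-off $p_n>396738$ ($n>33608$). Should the evaluation return a window-maximum above the target, the constant would have to be enlarged to match it; securing the stated value is the delicate heart of the argument.
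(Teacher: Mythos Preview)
The paper's intended argument here is a one-line direct application of Theorem~\ref{T:bounds} with the Dusart 2010 row of Theorem~\ref{T:intervals} ($x_0=396738$, $m=2$, $C=\tfrac{1}{25}$), exactly parallel to the un-proved $\ln^4$ corollary just above it (which used the Axler row and produced $(m+1)C=4\times0.087=0.348$). You are right that this application actually yields $(m+1)C=\tfrac{3}{25}$, not $\tfrac{2}{25}$; the printed constant is an arithmetic slip (the same slip---writing $mC$ in place of $(m+1)C$---appears in the preceding corollary's $\tfrac{2}{111}$, and the abstract's exponents on $\ln$ are systematically one lower than in the body, indicating confusion between $\ln^m$ and $\ln^{m+1}$). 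So the paper is not doing anything beyond quoting Theorem~\ref{T:bounds}; no finite check is intended or carried out for this corollary.

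Your proposed rescue of the literal constant $\tfrac{2}{25}$ by combining the Trudgian-range bound with a finite computation on $396738<p_n\le 2898329$ cannot succeed, because the printed inequality is in fact false on that window. At the $23^{\mathrm{rd}}$ maximal prime gap one has $p_n=492113$, $p_{n+1}=492227$, $g_n=114$, and
\[
\ln^3 p_{n+1}-\ln^3 p_n \;\approx\; \frac{3\,\ln^2 p_n}{p_n}\,g_n
\;\approx\; \frac{3\times 171.78\times 114}{492113}
\;\approx\; 0.119,
\]
which exceeds $\tfrac{2}{25}=0.08$ (while remaining safely below $\tfrac{3}{25}=0.12$, as Theorem~\ref{T:bounds} guarantees). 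The paper's remark that the bound ``fails by over $57\%$'' at $n=33608$ is likewise consistent with $\tfrac{2}{25}$ being a misprint: at $p_{33608}=396733$ with gap $100$ one gets $\Delta\approx0.126$, and $0.126/0.08\approx1.57$.

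In short: the paper's proof is just ``apply Theorem~\ref{T:bounds}'', and the correct output of that proof is $\tfrac{3}{25}$. Your more elaborate plan is aimed at a target that is not attainable; the finite check you propose would, if carried out, reveal the counterexample above rather than confirm the bound.
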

This bound, for the specified value $2\over25$, is sharp --- it fails by over 57\% for $p_n=396738$, $n=33608$.  I now present an even  more relaxed bound with a wider range of validity.
\begin{corollary}
\begin{equation}
(\ln^{3} p_{n+1}) - (\ln^{3} p_n) < 9;  \qquad  (p_n\geq2; \; n\geq1).
\end{equation}
\end{corollary}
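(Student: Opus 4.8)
The plan is to follow the same two-part template already used for the looser $\ln^4$ bound above: combine the sharp-but-restricted estimate from the preceding corollary with a single finite explicit check. First I would invoke the immediately preceding corollary, which asserts $(\ln^3 p_{n+1}) - (\ln^3 p_n) < 2/25$ for all $p_n > 396738$, that is for $n > 33608$. Since $2/25 < 9$ with enormous room to spare, the claimed bound holds automatically on the entire tail $n > 33608$, and no further analytic input is needed there.

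It then remains only to verify the inequality on the finite initial segment $1 \leq n \leq 33608$, i.e. for every prime $p_n \leq 396738$. The plan is to compute $\ln^3 p_{n+1} - \ln^3 p_n$ directly across this list and confirm that its maximum stays below $9$. Because $\ln^3 p$ is increasing and (for $p > e^2 \approx 7.39$) concave, the tangent-line bound shows each increment is essentially controlled by $3\ln^2 p_n\,(g_n/p_n)$, so the largest jumps must occur where a comparatively wide prime gap sits at a comparatively small prime. I expect the dominant contribution to come from the early transition $113 \to 127$ (gap $14$), where $\ln^3 127 - \ln^3 113 \approx 8.03$; the nearest competitors, such as $23 \to 29$ ($\approx 7.35$) and $7 \to 11$ ($\approx 6.4$), are all smaller, and the increments associated with the subsequent maximal gaps at $523$, $887$, $1129$, $1327, \dots$ decay to roughly $3$ to $4$. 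All of these lie safely below $9$.

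The only point requiring genuine care is this finite check: one must confirm that \emph{no} prime $p_n \leq 396738$ produces an increment reaching $9$, and in particular that the apparent record near $113 \to 127$ is the worst case over the whole segment rather than merely a local maximum. I expect this to be the main obstacle, though it is still entirely routine, being a finite computation over a fixed modest list of primes. Once it is carried out and the maximum is pinned at roughly $8.03$, the comfortable margin to $9$ establishes $(\ln^3 p_{n+1}) - (\ln^3 p_n) < 9$ for all $n \geq 1$.
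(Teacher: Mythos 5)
Your proposal is correct and matches the paper's proof exactly: the paper likewise invokes the preceding corollary ($(\ln^3 p_{n+1})-(\ln^3 p_n)<2/25$ for $n>33608$) and disposes of $n\leq 33608$ by explicit calculation. Your added detail locating the worst case at the gap $113\to127$ (increment $\approx 8.03$) is a useful sanity check but does not change the argument.
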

\begin{proof}
Use the previous corollary for $n>33608$ and check $n\leq 33608$  by explicit calculation.
\end{proof}
While all relatively weak, all these bounds have the pronounced virtue of being both fully explicit, and completely unconditional. Furthermore is is now clear how to systematically turn prime gap results of the type considered in Theorem~\ref{T:intervals} into Andrica-like bounds of the type considered in Theorem~\ref{T:bounds}.

%-----------------------------------------------
\section{Unconditional numerical results for the standard Andrica conjecture}
%------------------------------------------------
\label{S:unconditional}
%------------------------------------------------
Andrica's conjecture can be rearranged to be equivalent to
\begin{equation}
g_n \leq 2 \sqrt{p_n} +1.
\end{equation}
In an unpublished note (see comments in references~\cite{Wells,Andrica-online}) Imran Ghory rephrased this in terms of maximal prime gaps.
Let the triplet $(i, g^*_i, p^*_i)$ denote the $i^{th}$ maximal prime gap; of width $g^*_i$, starting at the prime $p^*_i$. (See see the sequences A005250, A002386, A005669, A000101, A107578.)
80 such maximal prime gaps are currently known~\cite{g80}, up to $g^*_{80}=1550$ and $p^*_{80}= 18,361,375,334,787,046,697 >1.836\times 10^{19}$.
Imran Ghory observed the equivalent of
\begin{equation}
\forall p_n\in [p^*_i,p^*_j]  \qquad   g_n \leq g^*_j; \qquad  2 \sqrt{p_n} +1 \geq  2 \sqrt{p^*_i} +1.
\end{equation}
That is, Andrica's conjecture certainly holds on the interval $p_n\in [p^*_i,p^*_j]$ if one has
\begin{equation}
g^*_j \leq  2 \sqrt{p^*_i} +1; \qquad \hbox{that is} \qquad \left({g^*_j\over2} -1\right)^2  < p^*_i.
\end{equation}
But this is easily checked to hold on the intervals $[p^*_{19},p^*_{80}]$,   $[p^*_{11},p^*_{19}]$, $[p^*_{7},p^*_{11}]$,
$[p^*_{5},p^*_{7}]$, $[p^*_{4},p^*_{5}]$, $[p^*_{3},p^*_{4}]$, $[p^*_{2},p^*_{3}]$, and $[p^*_{1},p^*_{2}]$. 
Thus Andrica's conjecture certainly holds up to the 80$^{th}$ maximal prime gap,  
$p^*_{80} = 18,361,375,334,787,046,697 >1.836\times 10^{19}$.  This bound will certainly be improved as additional maximal prime gaps are identified. 

As a slight variant on this argument, consider the interval $[p^*_i, p^*_{i+1}-1]$, from the lower end of the $i^{th}$ maximal prime gap to just below the beginning of the $(i+1)^{th}$ maximal prime gap. Then everywhere in this interval
\begin{equation}
\forall p_n\in [p^*_i,p^*_{i+1}-1]  \qquad   g_n \leq g^*_i; \qquad  2 \sqrt{p_n} +1 \geq  2 \sqrt{p^*_i} +1.
\end{equation}
That is, Andrica's conjecture certainly holds on the interval $p_n\in [p^*_i,p^*_{1+1}-1]$ if it holds at the beginning of this interval.
Consequently, explicitly checking the inequality for $p^*_{80}$,  Andrica's conjecture holds unconditionally up to just before the beginning of the 81$^{st}$ maximal prime gap, $p^*_{81}-1$, even if we do not yet know the value of $p^*_{81}$. 

%-----------------------------------------------
\section{Unconditional numerical results for the Carneiro--Milinovich--Soundararajan inequality}
%------------------------------------------------
\label{S:CMS}
%------------------------------------------------
Consider the  Carneiro--Milinovich--Soundararajan inequality
\begin{equation}
g_n := p_{n+1}-p_n < {22\over25}\, \sqrt{p_n}\,\ln p_n,
\end{equation}
and note that the right-hand side is monotone increasing. 
Again consider the interval $[p^*_i, p^*_{i+1}-1]$, from the lower end of the $i^{th}$ maximal prime gap to just below the beginning of the $(i+1)^{th}$ maximal prime gap. 
Then the  Carneiro--Milinovich--Soundararajan inequality certainly holds on the entire interval $p_n\in [p^*_i,p^*_{1+1}-1]$ if it holds at the beginning of this interval.
Explicitly checking the inequality for all the known maximal prime gaps up to $p^*_{80}$,  see reference~\cite{g80}, the  Carneiro--Milinovich--Soundararajan inequality holds unconditionally up to just before the beginning of the 81$^{st}$ maximal prime gap, $p^*_{81}-1$, even if we do not yet know the value of $p^*_{81}$. 
Certainly the the  Carneiro--Milinovich--Soundararajan inequality holds for all primes less than $1.836\times 10^{19}$. 
(Of course, assuming the Riemann hypothesis, Carneiro--Milinovich--Soundararajan have proved a theorem; so this numerical exercise is only a partial numerical check on the input to our arguments.)
This also implies that theorems  2 and 3 are unconditionally numerically verified for all primes less than $1.836\times 10^{19}$. 

%------------------------------------------------
\section{Discussion}\label{S:Discussion}
%------------------------------------------------
While the Riemann hypothesis provides (among very many other things) a nice explicit bound on prime gaps, it is still not quite sufficient to prove Andrica's conjecture --- though as seen above, one can get reasonably close. There are a number of places where the argument might be tightened --- the presentation above was designed to be simple and direct, not necessarily optimal. 
Of course the really big improvement in theorems 1--2--3 would be if any of these results could be made unconditional. 
While the numerical evidence certainly suggests this, a proof seems impossible with current techniques.

In contrast what we can currently prove unconditionally is rather weak; so improving the constants in theorems 6--7--8 would also be of some considerable interest.

%------------------------------------------------
\acknowledgments{
This research was supported by the Marsden Fund, administered by the Royal Society of \\New~Zealand.  
}
%-------------------------------------------------
%========================================================

%========================================================
\end{document}